\newtheorem{theorem}{Theorem}
\newtheorem{lemma}[theorem]{Lemma}
\newenvironment{proof}[1][Proof]{\textbf{#1.} }
\begin{document}

\title{On Mixed Concatenations of Fibonacci and Lucas Numbers Which are Fibonacci Numbers}
\author{Alaa ALTASSAN$^{1}$ and Murat ALAN$^{2}$ \\
$^{1}$King Abdulaziz University, Department of Mathematics,\\
P.O. Box 80203, Jeddah 21589, Saudi Arabia\\
e-mail: aaltassan@kau.edu.sa\\
$^{2}$Yildiz Technical University\\
Mathematics Department, 34210, Istanbul, Turkey.\\
e-mail: alan@yildiz.edu.tr
}

\maketitle

\begin{abstract}
Let $(F_n)_{n\geq 0}$ and $(L_n)_{n\geq 0}$ be the Fibonacci and Lucas sequences, respectively. In this paper we determine all Fibonacci numbers which are mixed concatenations of a Fibonacci and a Lucas numbers. By mixed concatenations of $ a $ and $ b $, we mean the both concatenations $\overline{ab}$ and $\overline{ba}$ together, where $ a $ and $ b $ are any two non negative integers. So, the mathematical formulation of this problem leads us searching the solutions of two Diophantine equations $ F_n=10^d F_m +L_k $ and $ F_n=10^d L_m+F_k $ in non-negative integers $ (n,m,k) ,$ where $ d $ denotes the number of digits of $ L_k $ and $ F_k $, respectively. We use lower bounds for linear forms in logarithms and reduction method in Diophantine approximation to get the results.
\end{abstract}

\section{Introduction}
Let $(F_n)_{n\geq 0}$ and $(L_n)_{n\geq 0}$ be the Fibonacci and Lucas sequences given by $F_0=0$, $F_1=1$, $L_0=2$, $L_1=1$, $F_{n+2}=F_{n+1}+F_{n}$ and $L_{n+2}=L_{n+1}+L_{n}$ for $n\geq 0,$ respectively. 

In recent years, the numbers in Fibonacci, Lucas or some similar sequences which are concatenations of two or more  repdigits  are investigated by a series of papers \cite{Alahmadi2,Dam1,Dam2, EK2,Qu,Trojovsky}. In the case of the concatenation of binary recurrent sequences, there is a general result due to Banks and Luca \cite{Banks}. In \cite{Banks}, they proved that if $ u_n $ is any binary recurrent sequence of integers, then only finitely many terms of the sequence $ u_n $ can be written as concatenations of two or more  terms of the same sequence $ u_n $ under some mild hypotheses on $ u_n.$ In particular, they proved that 13, 21 and 55 are the only Fibonacci numbers which are non trivial concatenations of two terms of Fibonacci numbers. In \cite{Alan}, Fibonacci and Lucas numbers which can be written as concatenations of two terms of the other sequences also investigated and it is shown that 13, 21 and 34 (1, 2, 3, 11, 18 and 521 ) are the only Fibonacci (Lucas) numbers which are concatenations of two Lucas (Fibonacci) numbers.
 
In this paper, we study the mixed concatenations of these famous two sequences which forms a Fibonacci number. By mixed concatenations of $ a $ and $ b $, for any two non negative integers $ a $ and $ b ,$ we mean the both concatenations $\overline{ab} $ and $ \overline{ba},$ together. So, we search all Fibonacci numbers of the form $\overline{F_mL_k}$, that is the concatenations of $ F_m $ and $ L_k $, as well as of the form $\overline{L_mF_k},$ that is the concatenations of $L_m $ and $ F_k.$ In other words, by the mathematical expression of this problem, we solve the Diophantine equations
\begin{equation}
F_n=10^d F_m +L_k
\label{FcFL}
\end{equation}
and 
\begin{equation}
F_n=10^d L_m+F_k
\label{FcLF}
\end{equation}
in non negative integers $ (n,m,k) $ where $ d $ denotes the number of digits of $ L_k $ and $ F_k $, respectively and we get the following  results.

\begin{theorem}
\label{main1}
All Fibonacci numbers which are concatenations of a Fibonacci and a Lucas numbers are only the numbers 1, 2, 3, 13, 21 and 34.
\end{theorem}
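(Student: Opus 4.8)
The plan is to recast the statement as the two Diophantine equations~\eqref{FcFL} and~\eqref{FcLF} and settle each by the familiar pairing of Baker's method with reduction: Binet's formula is used to manufacture small linear forms in logarithms of fixed algebraic numbers, Matveev's theorem supplies a large but effective upper bound for $n$, and the Baker--Davenport reduction (the Dujella--Peth\H{o} lemma together with continued fractions) collapses that bound to a range that can be searched by machine. Throughout one writes $F_n=(\alpha^n-\beta^n)/\sqrt5$, $L_n=\alpha^n+\beta^n$ with $\alpha=(1+\sqrt5)/2$ and $\beta=(1-\sqrt5)/2$, and uses $\alpha^{n-2}\le F_n\le\alpha^{n-1}$, $\alpha^{n-1}\le L_n<2\alpha^n$ together with the defining inequalities $10^{d-1}\le L_k<10^d$ and $10^{d-1}\le F_k<10^d$. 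All triples with $n$ below an explicit small bound are disposed of by a finite check, so from then on $n$ is large; comparing sizes gives $m<n$ and $k<n$, shows that $10^{d}F_m$ is the dominant summand on the right of~\eqref{FcFL} (and $10^{d}L_m$ that of~\eqref{FcLF}), and reduces the degenerate case $m=0$ in~\eqref{FcFL} to $F_n=L_k$, which has only small solutions.

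Take~\eqref{FcFL}. Dividing $F_n-10^{d}F_m=L_k$ by $10^{d}\alpha^m/\sqrt5$ and moving the $\beta$-powers to the other side gives $\bigl|\alpha^{\,n-m}10^{-d}-1\bigr|<4\,\alpha^{-m}$, so the linear form $\Lambda_1=(n-m)\log\alpha-d\log 10$ is exponentially small in $m$; it is nonzero, since $\alpha^{\,n-m}$ is a rational integer only when $n=m$, which is excluded here. Matveev's lower bound for $|\Lambda_1|$, set against this upper bound, yields $m\ll\log n$, while the same inequality forces $n-m=d\,\tfrac{\log 10}{\log\alpha}+O(\alpha^{-m})$; combining this with $10^{d-1}\le L_k<10^d$ shows that $\delta:=n-m-k$ is bounded by an absolute constant. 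Thus the equation breaks into finitely many families, one for each admissible $\delta$, but inside each family the parameter $m$ is known only to be $O(\log n)$.

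For a fixed such family I would substitute $n=m+k+\delta$, expand $F_{m+k+\delta}=F_{m+\delta}F_{k+1}+F_{m+\delta-1}F_k$, rearrange the equation into the form $\alpha^{\,k-1}\theta_{m,\delta}/\sqrt5\approx 10^{d}F_m$ for an explicit $\theta_{m,\delta}\in\mathbb{Q}(\alpha)$, and read off a second linear form $\Lambda_2=(k-1)\log\alpha-d\log 10+\log\!\bigl(\theta_{m,\delta}/(\sqrt5\,F_m)\bigr)$ whose size is exponentially small in $k$, hence in $n$. Matveev applied to $\Lambda_2$---whose third logarithm has height only $O(m)=O(\log n)$, so that the exponential smallness of $|\Lambda_2|$ still dominates---produces an absolute effective bound of the shape $n<10^{30}$. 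The Dujella--Peth\H{o} reduction applied to $\Lambda_1$ and then to $\Lambda_2$, iterated because the reduced inequalities still carry a free parameter, brings $n$ down to a few hundred, and a direct search over the surviving triples leaves exactly the Fibonacci numbers $1,2,3,13,21,34$. Repeating the whole argument for~\eqref{FcLF}, with $L_m=\alpha^m+\beta^m$ in the role played above by $F_m$ and with $10^{d}L_m$ as the dominant summand, yields the same list, and Theorem~\ref{main1} follows.

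The step I expect to be the real obstacle is not any single invocation of Matveev but the management of the small parameters between the two linear forms: after $\Lambda_1$ one knows $m$ only up to $O(\log n)$ rather than absolutely, the $\beta^m$ contribution is genuinely not negligible, and the coefficients of $\Lambda_2$ (through $\theta_{m,\delta}$) depend on $m$, so that dependence has to be carried through both Matveev's estimate and the reduction while one checks that the bound still collapses. Keeping this book-keeping---the initial threshold on $n$, the constants in Matveev, the iterated Dujella--Peth\H{o} passes---uniform and effective across both equations is where essentially all of the real work lies.
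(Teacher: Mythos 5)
Your proposal is correct and follows essentially the same route as the paper: a two-logarithm linear form that bounds $m$ (equivalently $n-k$) logarithmically in $n$, a three-logarithm form whose third entry carries the bounded parameters $m$ and $n-m-k$, Matveev's theorem to extract an absolute bound on $n$, and a continued-fraction/Dujella--Peth\H{o} reduction run first on the homogeneous form to pin $m$ down absolutely and then, for each surviving pair of small parameters, on the second form, finishing with a finite search. The only deviations are cosmetic (you normalize by $10^d\alpha^m$ and package the second form via $F_{m+k+\delta}=F_{m+\delta}F_{k+1}+F_{m+\delta-1}F_k$, whereas the paper divides by $\alpha^n$ and keeps the factor $1-\alpha^{k-n}\sqrt5$, also proving $\Lambda_2\neq 0$ by conjugation, a routine step you omit), together with the harmless slip that Theorem~\ref{main1} concerns only equation~\eqref{FcFL}: equation~\eqref{FcLF} is the subject of Theorem~\ref{main2} and yields the list $\{13,21\}$, not the same list.
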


\begin{theorem}
\label{main2}
All Fibonacci numbers which are concatenations of a Lucas and a Fibonacci numbers are only the numbers 13 and 21.
\end{theorem}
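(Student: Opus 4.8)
\medskip
\noindent\textit{Strategy of proof of Theorem~\ref{main2}.}
The plan is to solve \eqref{FcLF}, that is $F_n=10^dL_m+F_k$ with $d$ the number of digits of $F_k$, by the Baker--Davenport method: bound the variables via two applications of a lower bound for linear forms in logarithms (Matveev), then reduce the (huge) resulting bound with the Dujella--Peth\H{o} lemma and a short machine search. After disposing of the cases where $n$, $m$ or $k$ is below an explicit threshold by direct inspection (this also frees the estimates below from exceptional small indices), I would first record the sizes of the variables. From $10^dL_m\le F_n<10^d(L_m+1)$, $10^{d-1}\le F_k<10^d$, and the standard bounds $\alpha^{t-2}\le F_t\le\alpha^{t-1}$, $\alpha^{t-1}\le L_t\le 2\alpha^{t}$, one gets $k=\tfrac{\log10}{\log\alpha}\,d+O(1)$ and $n-m=\tfrac{\log10}{\log\alpha}\,d+O(1)$, so that $s:=n-k-m$ lies in an explicitly bounded finite set of integers; moreover $n-k\ge 1$ since $F_n>F_k$, hence $n-k=m+s\ge 1$.

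For the first linear form, write \eqref{FcLF} with the Binet formula as $\dfrac{\alpha^n}{\sqrt5}-10^d\alpha^m=\dfrac{\beta^n}{\sqrt5}+10^d\beta^m+F_k$; the right-hand side has absolute value at most $3\cdot 10^d$, so dividing by $10^d\alpha^m$ gives $\bigl|\alpha^{\,n-m}(\sqrt5\cdot 10^d)^{-1}-1\bigr|<3\alpha^{-m}$. Applying Matveev's theorem to the nonzero form $\Lambda_1=(n-m)\log\alpha-d\log10-\log\sqrt5$ and using $n-m=O(d)$ yields $m<C_1(1+\log d)$ for an explicit constant $C_1$. The purpose of this step is that it forces the auxiliary algebraic numbers $L_m$ and $\alpha^{\,m+s}-1$ to have logarithmic height only $O(\log d)$.

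For the second linear form, rewrite \eqref{FcLF} as $10^dL_m=F_n-F_k$ and factor $\alpha^n-\alpha^k=\alpha^k(\alpha^{\,n-k}-1)$ to obtain $\bigl|\alpha^k(\alpha^{\,n-k}-1)-\sqrt5\cdot 10^dL_m\bigr|=|\beta^n-\beta^k|<2$, hence $\bigl|(\alpha^{\,m+s}-1)\alpha^k\,(\sqrt5\,L_m\,10^d)^{-1}-1\bigr|<10^{-d}$. For each admissible value of $s$ I would apply Matveev to $\Lambda_2=\log(\alpha^{\,m+s}-1)+k\log\alpha-\log(\sqrt5\,L_m)-d\log10$: since its logarithms have height $O(m)=O(\log d)$ while its coefficients are $O(d)$, Matveev gives $\log|\Lambda_2|>-C_2(\log d)^3$, and comparing with $|\Lambda_2|<10^{-d}$ forces $d<D_0$ for an absolute constant $D_0$, so that $m,k,n$ are all bounded explicitly. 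One must separately verify $\Lambda_2\ne0$: if $\Lambda_2=0$, taking the $\mathbb{Q}(\sqrt5)/\mathbb{Q}$-norm of $(\alpha^{\,m+s}-1)\alpha^k=\sqrt5\,L_m\,10^d$ gives $\bigl|\,(-1)^{m+s}-L_{m+s}+1\,\bigr|=5L_m^2\,10^{2d}$, impossible once $d$ is large because the left side is $O(\alpha^{m+s})$ while $m+s=O(\log d)$.

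Finally, to make the bound effective I would run the Dujella--Peth\H{o} reduction lemma twice: first on $\Lambda_1$, rewritten as $\bigl|d\,\tfrac{\log10}{\log\alpha}-(n-m)+\tfrac{\log\sqrt5}{\log\alpha}\bigr|<A\,\alpha^{-m}$, to cut $D_0$ down to a small bound on $m$; then, for each of the finitely many remaining pairs $(m,s)$, on $\Lambda_2$, rewritten as $\bigl|k\log\alpha-d\log10+\log\gamma_{m,s}\bigr|<10^{-d}$ with $\gamma_{m,s}=(\alpha^{\,m+s}-1)/(\sqrt5\,L_m)$, to cut $D_0$ down to a small bound on $d$. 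A direct check of the remaining finite list of triples $(n,m,k)$ then leaves only $F_n=13$ and $F_n=21$. The main obstacle is the second linear form: one has to notice that only the factorization $\alpha^n-\alpha^k=\alpha^k(\alpha^{\,n-k}-1)$ --- available precisely because $n-k$ equals $m$ up to the bounded quantity $s$ --- produces a form whose ``small side'' decays like $10^{-d}$ rather than like $\alpha^{-m}$ (as it would in every other natural rearrangement), and one must feed the merely conditional bound $m=O(\log d)$ from the first step into Matveev in order to close the loop.
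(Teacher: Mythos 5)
Your outline is essentially the paper's own proof. Your $\Lambda_1=(n-m)\log\alpha-d\log 10-\log\sqrt5$ is the paper's $\Lambda_3$ in \eqref{3ineq} up to normalization (the paper divides by $\alpha^{n-m}/\sqrt5$, you by $10^{d}\alpha^{m}$; since $n-k$ and $m$ differ by the bounded quantity $s$, the resulting bounds $m=O(\log d)$ and the paper's \eqref{bn-k2} carry the same information). Your $\Lambda_2$, built from $\alpha^{n}-\alpha^{k}=\alpha^{k}(\alpha^{n-k}-1)$ with the third algebraic number $(\alpha^{n-k}-1)/(\sqrt5\,L_m)$ of height $O(n-k)=O(m)$, is exactly the paper's $\Lambda_4$ in \eqref{4ineq} (whose $\eta_3=L_m\sqrt5/(1-\alpha^{k-n})$ is $\alpha^{n-k}$ times the reciprocal of your $\gamma_{m,s}$), and feeding the conditional bound on $m$ into the height of that number is precisely how the paper closes the loop; your norm argument for $\Lambda_2\neq0$ is an acceptable substitute for the paper's conjugation argument. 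The two-stage Dujella--Peth\H{o} reduction (first on the three-constant form to pin down $m$, as in Lemma~\ref{bmbound}, then on the second form for each admissible pair $(m,\,n-k)$) is also the same.

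The one concrete snag your plan does not anticipate is that the second reduction, as described, fails for two values of $n-k$: when the computation is actually carried out, the quantity $\epsilon=\|\mu q\|-M\|\tau q\|$ of Lemma~\ref{reduction} comes out negative for $n-k=4$ and $n-k=8$ (for instance $(\alpha^{4}-1)/\sqrt5=\alpha^{2}$, so for $n-k=4$ your $\mu$ is within $O(\alpha^{-2m})$ of an integer and $\|\mu q\|$ collapses), so the lemma yields no bound there. The paper disposes of these two cases by an independent elementary observation: reducing \eqref{FcLF} modulo $5$ gives $F_n\equiv F_k\pmod 5$, and since the Fibonacci sequence modulo $5$ has period $20$ one checks that $F_{t+4}\equiv F_t$ and $F_{t+8}\equiv F_t \pmod 5$ never occur, whence $n-k\notin\{4,8\}$. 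Some such supplementary argument is genuinely needed to complete your reduction step; apart from this, your proposal is sound and matches the paper.
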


In the next chapter, we give some details of the methods we used in this study to prove the above theorems. In fact, we mainly use two powerful tools. The first one is the theory of non zero linear forms in logarithms of algebraic numbers which is due to Matveev \cite{Matveev} and the second one is reduction method based on the theory of continued fractions given in \cite{DP} which is a version of Baker-Davenport lemma \cite{Baker-Davenport}. In the third section, we give the proofs of the above theorem. All calculations and computations are made with the help of the software \textsf{Maple}.

\section{Preliminaries}
Let $F_n$ and $L_n$ be the Fibonacci and Lucas numbers, respectively. The Binet formula for Fibonacci and Lucas numbers are given by
$$
F_n = \dfrac{\alpha^n-\beta^n}{\sqrt{5}}, \qquad   L_n=\alpha^n+\beta^n \quad  \quad n\geq 0, 
$$
where
\[
\alpha=\frac{1+\sqrt{5}}{2} \quad \text{and} \quad \beta=\frac{1-\sqrt{5}}{2}
\]
are the roots of the equation $x^2-x-1=0.$ By using the Binet formula of these sequences, one can see that, by induction,
\begin{equation}
\label{F1}
\alpha^{n-2} \leq F_n \leq \alpha^{n-1} 
\end{equation}
and 
\begin{equation} 
\label{L1}
\alpha^{n-1}\leq L_n \leq 2\alpha^{n} 
\end{equation}
hold for all $n\geq 1$ and  $n\geq 0,$ respectively.

Let $\eta$ be an algebraic number of degree $d$ with minimal polynomial 
\[
a_0x^d+a_1x^{d-1}+\cdots+a_d=a_0\prod_{i=1}^{d}(x-\eta^{(i)}),
\]
where the $a_i$'s are relatively prime integers with $a_0>0$ and the $\eta^{(i)}$'s are conjugates of $\eta$. Recall that, the logarithmic height of $\eta$ is defined by
\[
h(\eta)=\frac{1}{d}\left(\log a_0+\sum_{i=1}^{d}\log\left(\max\{|\eta^{(i)}|,1\}\right)\right).
\]
In particular, for a rational number $p/q$ with $\gcd(p,q)=1$ and $q>0$,  $h(p/q)=\log \max \{|p|,q\}$. The logarithmic height $ h(\eta) $ has the following properties:
\begin{itemize}
\item[$ \bullet $] $h(\eta\pm\gamma)\leq h(\eta) + h(\gamma)+\log 2$.
\item[$ \bullet $] $h(\eta\gamma^{\pm 1})\leq h(\eta)+h(\gamma)$.
\item[$ \bullet $] $h(\eta^{s})=|s|h(\eta),$ $ s \in \mathbb{Z} $.
\end{itemize}
\begin{theorem}[Matveev's Theorem]
\label{Matveev}
Assume that $\gamma_1, \ldots, \gamma_t$ are positive real algebraic numbers in a real algebraic number field $\mathbb{K}$ of degree $ d_\mathbb{K} $, $b_1,\ldots,b_t$ are rational integers, and 
\[
\Lambda:=\eta_1^{b_1} \cdots \eta_t^{b_t}-1,
\]
is not zero. Then
\[
|\Lambda|>\exp\left(-1.4\cdot 30^{t+3}\cdot t^{4.5}\cdot d_\mathbb{K}^2(1+\log d_\mathbb{K})(1+\log B)A_1\cdots A_t\right),
\]
where $B\geq \max\{|b_1|,\ldots,|b_t|\},$ and  $A_i\geq \max\{d_\mathbb{K}h(\eta_i),|\log \eta_i|, 0.16\},$ for all $i=1,\ldots,t.$
\end{theorem}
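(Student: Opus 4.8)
The plan is to follow the Gelfond--Baker method for linear forms in logarithms, in the explicit and quantitatively optimized form that constitutes Matveev's contribution. Because all $\eta_i$ lie in the \emph{real} field $\mathbb{K}$ and are positive, I would first pass from $\Lambda = \eta_1^{b_1}\cdots\eta_t^{b_t} - 1$ to the associated real linear form $\Lambda' := b_1\log\eta_1 + \cdots + b_t\log\eta_t$, noting that $\Lambda = e^{\Lambda'} - 1$, so $|\Lambda'|$ and $|\Lambda|$ are comparable when $\Lambda$ is small; it therefore suffices to bound $|\Lambda'|$ from below, and I would argue by contradiction, assuming $|\Lambda'|$ is smaller than the claimed bound. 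After harmless normalizations (discarding indices with $b_i = 0$, separating the degenerate case where some power is a root of unity, and working inside $\mathbb{K}$ so all conjugates are available), the quantities $A_i \geq \max\{d_{\mathbb{K}} h(\eta_i), |\log\eta_i|, 0.16\}$ and $B \geq \max_i |b_i|$ serve as the arithmetic and analytic size parameters throughout.

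The heart of the argument is the construction of an auxiliary function. I would build a nonzero function of a single complex variable as an integer linear combination $\phi(z) = \sum_{\lambda} p(\lambda)\, \eta_1^{\lambda_1 z} \cdots \eta_t^{\lambda_t z}$, the multi-index $\lambda$ ranging over a suitable box and the coefficients $p(\lambda) \in \mathbb{Z}$ to be determined, designed so that the identity $\eta_1^{b_1}\cdots\eta_t^{b_t} = 1 + \Lambda$ with $\Lambda \approx 0$ can be exploited. Requiring $\phi$ and its first several derivatives to vanish at a block of consecutive integers $z = 0, 1, \ldots$ imposes a system of linear equations on the $p(\lambda)$. Siegel's lemma (the Thue--Siegel box principle) produces a nonzero integer solution of controlled height provided there are more unknowns than equations, an inequality arranged by choosing the degree and vanishing parameters appropriately in terms of $t$, $d_{\mathbb{K}}$, the $A_i$ and $B$. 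Keeping the resulting coefficient height small is where the careful tracking of constants begins.

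The next stage is the extrapolation, an alternation of an analytic with an arithmetic estimate. Analytically, since $\Lambda'$ is assumed minuscule, $\phi$ is extremely small on the constructed block, and the maximum modulus principle combined with a Hermite-type interpolation formula propagates this smallness to a much larger disc and to higher vanishing order. Arithmetically, each value of $\phi$ (and of its derivatives) at an integer point is, after multiplication by a suitable denominator, a nonzero algebraic integer of $\mathbb{K}$ unless it vanishes; by the Liouville (size) inequality a nonzero such number cannot be as small as the analytic bound forces, so these values must in fact be zero. Thus $\phi$ vanishes to high order on an enlarged range, and iterating this analytic/arithmetic seesaw a bounded number of times amplifies the total vanishing enormously.

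The argument is closed by a zero estimate. If the iterated extrapolation forced more zeros (counted with multiplicity) than a nonzero exponential polynomial of the prescribed degree can possibly possess, the coefficient vector $p(\lambda)$ must be identically zero, contradicting its nontrivial construction; the only escape is that $|\Lambda'|$ was never as small as assumed, which is exactly the desired lower bound. The genuinely hard part, and the reason this theorem is invoked rather than reproved in papers such as this one, is entirely quantitative: obtaining the explicit constant $1.4 \cdot 30^{t+3} \cdot t^{4.5}$ with its comparatively tame dependence on the number $t$ of logarithms requires optimizing every parameter above and, decisively, replacing the classical zero estimate by a sharper multiplicity estimate together with a Kummer-theoretic descent that lowers the effective rank. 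Pushing this optimization through while retaining the precise stated dependence on $d_{\mathbb{K}}$, $B$ and the $A_i$ is the core difficulty that separates Matveev's inequality from the earlier, weaker forms of Baker's theorem.
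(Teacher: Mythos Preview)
The paper does not prove this statement at all: Matveev's theorem is quoted as a tool from the literature (reference \cite{Matveev}) and used as a black box in the proofs of Theorems~\ref{main1} and~\ref{main2}. There is therefore no ``paper's own proof'' to compare against.

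Your sketch is a reasonable high-level summary of the Gelfond--Baker machinery (auxiliary function, Siegel's lemma, extrapolation via alternating analytic and arithmetic estimates, closing with a zero/multiplicity estimate and Kummer descent), and you correctly identify that the explicit constant $1.4\cdot 30^{t+3}\cdot t^{4.5}$ is the genuinely hard quantitative content of Matveev's work. But as a proof it is only an outline: none of the parameter choices are specified, the Kummer-theoretic step that gives the improved dependence on $t$ is merely named rather than carried out, and the passage from the assumed smallness of $\Lambda'$ to the contradiction is not made quantitative. For the purposes of this paper that is fine, since the theorem is meant to be cited, not reproved; but if your intent was to supply a self-contained proof, what you have written would not suffice without filling in the full technical apparatus of Matveev's original paper.
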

We cite the following lemma from \cite{DP}, which is a version of the reduction method based on the Baker-Davenport lemma \cite{Baker-Davenport}, and we use it to reduce some upper bounds on the variables. Recall that, for a real number $ \theta, $ we put $ ||\theta||=\min\{ |\theta -n | :  n \in\mathbb{N} \}, $ the distance from $ \theta $ to the nearest integer.
\begin{lemma} \label{reduction}
Let $M$ be a positive integer, $p/q$ be a convergent of the continued fraction of the irrational $\gamma$ such that $q>6M$, and let $A,B,\mu$ be some real numbers with $A>0$ and $B>1$. If $\epsilon:=||\mu q||-M||\gamma q|| >0$, then there is no solution to the inequality
\[
0< | u\gamma-v+\mu | <AB^{-w},
\]
in positive integers $u,v$ and $w$ with
\[
u\leq M \quad\text{and}\quad w\geq \frac{\log(Aq/\epsilon)}{\log B}.
\]
\end{lemma}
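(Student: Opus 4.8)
The plan is to prove the lemma by contradiction, using only the triangle inequality together with the best–approximation property of continued fraction convergents. So I would begin by assuming that, on the contrary, there exist positive integers $u,v,w$ with $u\leq M$, with $w\geq \log(Aq/\epsilon)/\log B$, and with $0<|u\gamma-v+\mu|<AB^{-w}$. The first move is to multiply this chain of inequalities by the positive integer $q$, turning it into a statement about $q\gamma$, namely $0<|uq\gamma-vq+\mu q|<qAB^{-w}$; the point of this is that $q\gamma$ is precisely the quantity that the convergent $p/q$ approximates extremely well.

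The key structural step is the following. Since $p/q$ is a convergent of the irrational $\gamma$ and $q>6M\geq 6$ (in particular $q\geq 2$), the classical estimate $|\gamma-p/q|<1/q^{2}$ gives $|q\gamma-p|<1/q\leq 1/2$, so $p$ is the integer nearest to $q\gamma$, i.e. $|q\gamma-p|=||q\gamma||$. Writing $q\gamma=p+\sigma$ with $|\sigma|=||q\gamma||$, I would then expand
\[
uq\gamma-vq+\mu q=(up-vq)+\mu q+u\sigma ,
\]
and set $N:=up-vq\in\mathbb{Z}$. Because $N$ is an integer, $||N+\mu q||=||\mu q||$, and since the absolute value of any real number is at least its distance to the nearest integer, $|N+\mu q|\geq ||\mu q||$; meanwhile $|u\sigma|\leq M\,||q\gamma||$. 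Hence
\[
|uq\gamma-vq+\mu q|\;\geq\;|N+\mu q|-|u\sigma|\;\geq\;||\mu q||-M\,||q\gamma||\;=\;\epsilon .
\]

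Combining this lower bound with the upper bound from the first paragraph yields $\epsilon\leq |uq\gamma-vq+\mu q|<qAB^{-w}$, hence $B^{w}<qA/\epsilon$, and taking logarithms (legitimate since $B>1$ and $A,q,\epsilon>0$) gives $w<\log(qA/\epsilon)/\log B$, contradicting the assumed lower bound on $w$. That completes the argument.

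The only point I expect to require genuine care is the identification $|q\gamma-p|=||q\gamma||$: it rests on the fact that the numerator of a convergent is the nearest integer to $q\gamma$, which fails for the trivial $0$-th convergent ($q=1$), and this is exactly why the hypothesis $q>6M$ — forcing $q$ to be large — is imposed. That same largeness of $q$ (it gives $M\,||q\gamma||<M/q<1/6$) is also what makes the standing assumption $\epsilon>0$ something one can realistically check in applications. Apart from this, the proof is completely elementary; all the heavy lifting in the paper will come from Matveev's theorem, with this lemma serving only to compress the enormous bounds it produces into a finite, computationally checkable range.
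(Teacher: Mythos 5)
Your proof is correct. The paper itself does not prove this lemma: it is quoted verbatim from Dujella and Peth\H{o} \cite{DP} as a known tool, so there is no in-paper argument to compare against. Your argument --- multiply by $q$, use that the convergent's numerator $p$ is the nearest integer to $q\gamma$ (valid since $|q\gamma-p|<1/q\leq 1/2$ because $q>6M\geq 6$), split off the integer $up-vq$, and apply the triangle inequality to get the lower bound $\epsilon$, which collides with the upper bound $qAB^{-w}$ --- is exactly the standard proof given in that reference, and every step checks out. Your closing remark is also on point: the hypothesis $q>6M$ is not needed for the chain of inequalities itself (only $q\geq 2$ and $u\leq M$ are used); its role is to make $M\|q\gamma\|<1/6$ so that the condition $\epsilon>0$ has a realistic chance of holding in applications.
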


\section{Proof of Theorem \ref{main1} and \ref{main2}}
\textbf{Proof of Theorem \ref{main1}:}
Assume that the equation \eqref{FcFL} holds. We will need the relations among the variables $ n, m, k $ and $ d $ through this section. Note that, we may write the number of digits of $ L_k $ as $ d=\lfloor \log_{10}L_k \rfloor +1$  where $ \lfloor \theta\rfloor $ is the floor function of $ \theta $ , that is the greatest integer less than or equal to $ \theta .$ 
Thus,
\begin{align*}  
d=\lfloor \log_{10}L_k \rfloor +1 \leq  1+\log_{10}L_k  & \leq 1+ \log_{10}{ (2\alpha^{k} )}= 1+ {k}\log_{10}{ \alpha } +\log_{10}{ 2 }  \\
 \end{align*}
and
$$  d=\lfloor \log_{10}L_k \rfloor +1 > \log_{10}L_k  \geq  \log_{10}{ \alpha^{k-1} } \geq {(k-1)} \log_{10}{ \alpha } .$$
From the above relations we may get more explicit bounds for $ d $ as
\begin{equation}
\label{d}
\frac{k-1}{5} < d < \frac{k+6}{4},
\end{equation}
by using the fact that $ (1/5)< \log_{10}{ \alpha } \cong 0.208... <(1/4) $ and $ \log_{10}{ 2 } < 0.31 .$ 

In particular,
$$ L_k = 10^{ \log_{10}L_k } <10^d < 10^{1+ \log_{10}L_k } <10 L_k .$$

From the last inequality together with \eqref{FcFL} we write
$$\alpha^{n-2} \leq F_n=10^d F_m +L_k \leq 10 L_k F_m +L_k <11 F_mL_k < 22 \alpha^{m+k-1} \leq \alpha^{m+k+6} $$
and
$$ \alpha^{n-1} \geq F_n=F_m 10^d +L_k > F_m L_k+L_k >  F_m L_k \geq \alpha^{m+k-3}.$$
Hence, we have that
\begin{equation}
\label{n}
m+k-2 < n < m+k+8.
\end{equation}
Before further calculations, we wrote a short computer program to search the variables $ n,m $ and $ k $ satisfying \eqref{FcFL} in the range $ 0\leq m,k <200 $ and we found only the Fibonacci numbers given in Theorem \ref{main1}. So from now on we may assume that $ \max \{m,k\} \geq 200. $

Note that, we may take $ n-k \geq 4 .$ Indeed, using the well-known fact $ L_k=F_{k+1}+F_{k-1} $, see for example \cite{Koshy}, we may write equation \eqref{FcFL} as
$$ F_n= 10^d F_m+F_{k+1}+F_{k-1} .$$
Then, clearly $ n \neq k $ and $ n \neq k+1 .$ If $ m=0 ,$ then the case $  F_n=L_k $ is possible only for $ L_k \in \{ 1,2,3 \}, $ that is $ \max \{m,k\} < 3, $ a contradiction. So $ m \neq 0 $ and hence from the inequality
$$  F_n=10^d F_m  +L_k  \geq 10^d +L_k > 2L_k = 2 (F_{k+1}+F_{k-1}) ,$$
we see that the cases $ n = k+2 $ and $ n = k+3 $ are not also possible. So we get that
\begin{equation}
\label{nk4}
n-k \geq 4.
\end{equation}
Using Binet formula for Fibonacci and Lucas sequences, we rewrite equation \eqref{FcFL} as
$$ \dfrac{\alpha^{n} -\beta^{n}}{\sqrt{5}} = \dfrac{\alpha^{m}}{\sqrt{5}} 10^d -  \dfrac{\beta^{m}}{\sqrt{5}} 10^d +L_k,$$
$$  \dfrac{\alpha^{n}}{\sqrt{5}} - \dfrac{ 10^d  \alpha^{m}}{\sqrt{5}} = \dfrac{\beta^{n}}{\sqrt{5}} -  \dfrac{ 10^d  \beta^{m}}{\sqrt{5}} +  L_k. $$
Multiplying both sides of the above equation by $ {\sqrt{5}}/{\alpha^{n}} ,$ and taking absolute values of the both sides of it, we get that
$$ 
\left| 1-  \dfrac{10^{d} }{ \alpha^{n-m}}  \right|  \leq  \dfrac{ |\beta^{n}| }{\alpha^n} +  \dfrac{ 10^d  |\beta^{m}|}{ \alpha^n } + \dfrac{L_k \sqrt{5} }{\alpha^n}
$$
$$                                                                       
 \qquad \qquad  \qquad  \leq  \dfrac{ 1}{\alpha^{2n}} +  \dfrac{ 10 L_k  }{ \alpha^{n+m} } + \dfrac{2 \alpha^k \sqrt{5} }{\alpha^n}  
$$
$$
 \qquad \qquad  \qquad < \dfrac{ 1}{\alpha^{2n}} +  \dfrac{ 20 \alpha^k  }{ \alpha^{n+m} } + \dfrac{2 \sqrt{5} }{\alpha^{n-k}}. 
$$

Since $ 20 < \alpha^{7} ,$ we find that
\begin{equation}
\label{1ineq}  
\Lambda_1 := \left| 1-  \dfrac{10^{d} }{ \alpha^{n-m}}  \right| < \dfrac{ 3}{ \alpha^{n-k-7}}.
\end{equation}
Let $(\eta_1, b_1)=(10, d)$ and  $(\eta_2, b_2)=(\alpha, -(n-m)),$ where $ \eta_1, \eta_2 \in \mathbb{K}=\mathbb{Q}(\sqrt{5}).$ We take $ d_\mathbb{K}=2 $, to be the degree of the real number field $ \mathbb{K}.$ 
Since $  h(\eta_1)=\log{10} ,$  $h(\eta_2)=\dfrac{1}{2} \log {\alpha} ,$ we take $ A_1 = 2\log{10} ,$  $A_2= \log {\alpha}.$ Suppose that $ n-m < d.$ Then from the two relations \eqref{d} and  \eqref{n}, we get that 
$$ k-2<n-m<d<(k+6)/4,$$ 
which implies $ k \leq 4 .$ Since $ L_4=7 ,$  we have $ d=1 .$ Hence $ n=m,$ that is $ F_n=10 F_n+L_k ,$  which is clearly false . So we take
$$
B: = \max\{ |b_i | \} = \max\{ d, n-m\} = n-m.
$$
In \eqref{1ineq},  $ \Lambda_1 \neq 0 .$ Indeed if $ \Lambda_1=0 ,$ then we get that $ \alpha^{n-m}= 10^d \in \mathbb{Q} $, which is possible only for $ n=m $ and we know that this is not the case. So $ \Lambda_1 \neq 0.$ Now we apply Theorem \ref{Matveev} to $ \Lambda_1 $ and we get that
$$
\log{(  \Lambda_1 ) } > -1.4 \cdot 30^5 \cdot 2^{4.5} \cdot 2^2 (1+\log 2)(1+\log {(n-m)}) \cdot 2 \log{10}\cdot \log \alpha .
$$
On the other hand, taking the logarithm of both sides of \eqref{1ineq}, we get also
$$
\log{(  \Lambda_1 ) } < \log 3 - (n-k-7) \log \alpha.
$$
Combining last two inequalities, we get that
\begin{equation}
\label{n-k}
n-k-7 < 2.41 \cdot 10^{10}\cdot (1+\log {(n-m)}).
\end{equation}
We will turn \eqref{n-k} later. Now we rewrite \eqref{FcFL} as
$$ \dfrac{\alpha^{n}}{\sqrt{5}} - \alpha^k - 10^d F_m = \dfrac{\beta^{n}}{\sqrt{5}}+\beta^k ,$$
$$  \dfrac{ \alpha^n}{\sqrt{5}} \left( 1- \alpha^{k-n} \sqrt{5} \right) - 10^d F_m = \dfrac{\beta^{n}}{\sqrt{5}}+\beta^k. $$
Note that, by \eqref{nk4}, $ 1- \alpha^{k-n} \sqrt{5} >0 $  and  $ 1< \dfrac{1}{1 - \alpha^{k-n} \sqrt{5} } <2 $   for  $ n-k \geq 4 .$ So, we may divide, and than take the absolute value of both sides of the last equality to get that
$$ 
\left| 1- \dfrac{ F_m 10^d \sqrt{5} }{  { \alpha^n} \left( 1- \alpha^{k-n} \sqrt{5} \right)   } \right|  < \left(  \dfrac{1}{1 - \alpha^{k-n} \sqrt{5} }    \right) \left(  \dfrac{ |\beta^n | }{\alpha^n}  +  \dfrac{ |\beta^k | \sqrt{5} }{\alpha^n }   \right)
$$
$$
\qquad \qquad <    2   \left(  \dfrac{ 1 }{\alpha^{2n}}  +  \dfrac{  \sqrt{5} }{\alpha^{n+k} }   \right).
$$
So
\begin{equation}
\label{2ineq}
\left| \Lambda_2 \right | :=  \left| 1- \dfrac{ 10^d F_m  \sqrt{5} }{  { \alpha^n} \left( 1- \alpha^{k-n} \sqrt{5} \right)   } \right|  < \dfrac{7}{\alpha^{2k}}.
\end{equation}
Let
$(\eta_1, b_1)=(10, d)$, $(\eta_2, b_2)=(\alpha, -n)$ and $(\eta_3, b_3)= \left( \dfrac{ F_m \sqrt{5} }{ \left( 1- \alpha^{k-n} \sqrt{5} \right)  }  , 1 \right) $.
Again $ \eta_1, \eta_2$ and $ \eta_3 $ are all belongs to the real quadratic number field $ \mathbb{K}=\mathbb{Q}(\sqrt{5}) .$ So we take $ d_\mathbb{K}=2 $, to be the degree of $ \mathbb{K}, $  $h(\eta_1)= \log{10}$, $h(\eta_2)=(1/2) \log(\alpha)$ and for $ h(\eta_3) ,$ we need the properties of logarithmic height given in the Preliminaries, so that
\begin{align*}
h(\eta_3)= h \left( \dfrac{ F_m \sqrt{5} }{  1- \alpha^{k-n} \sqrt{5}   }  \right) & \leq h( F_m ) +h( \sqrt{5}) +h( 1- \alpha^{k-n} \sqrt{5} )    \\
& \leq \log (F_m)+   h( \sqrt{5}) + h(\alpha^{k-n} \sqrt{5} ) +\log 2 \\
& \leq (m-1) \log(\alpha) + \dfrac{ |k-n| }{2} \log \alpha + 1. \\
\end{align*}
Since $ m-1<n-k+1 $ from \eqref{n}, we write 
$ h(\eta_3)< 1+\dfrac{3(n-k)+2}{2} \log(\alpha).$

So we take
$$
A_1 = 2\log {10} ,\quad  A_2= \log \alpha \quad \text{and} \quad A_3= 2+ {(3(n-k)+2)} \log(\alpha).$$
By \eqref{d}, if $ n<d<(k+6)/4 ,$ then we find that $ 4n-6<k $ and hence $ m+3n<8  $ from \eqref{n}, a contradiction since $ \max\{m, k \} \geq 200. $ So
$$ B:=\max\{ |b_i | \} = \max\{ d, n, 1 \}= n.$$
We show that $ \Lambda_2 \neq 0. $ For this purpose, assume that $ \Lambda_2 = 0. $ Then we get  that
$$
\alpha^n- \alpha^{k} \sqrt{5} = 10^d  F_m \sqrt{5},
$$
Conjugating this expression in $ \mathbb{K} $ we obtain
$$
\beta^n+ \beta^{k} \sqrt{5} =  - 10^d F_m \sqrt{5}.
$$
From these last two equality, we find that
$$
\label{d2neq2}
10 \sqrt{5} \leq 10^d F_m  \sqrt{5} = | \beta^n+ \beta^{k} \sqrt{5} |  \leq 2 \max \{  \beta^n,  \beta^{k} \sqrt{5} \} \leq 2 \sqrt{5}, 
$$
a contradiction. So we conclude that $ \Lambda_2 \neq 0. $ Thus, we apply Theorem \ref{Matveev}  to $ \Lambda_2 $ given in \eqref{2ineq} and we get that
\begin{equation}
\label{23}
\log{(  \Lambda_2 ) } > -1.4 \cdot  30^6 \cdot 3^{4.5} \cdot 2^2 (1+\log 2)(1+\log n) 2\log{10} \log \alpha \cdot [ 2+ {(3(n-k)+2)} \log(\alpha)].
\end{equation}
On the other hand, from \eqref{2ineq}, we know that
\begin{equation}
\label{24}
\log{(  \Lambda_2 ) } < \log{7} -2k \log \alpha. 
\end{equation}
Combining \eqref{23} and \eqref{24} we get that
\begin{equation}
\label{2k}
k < 2.24 \cdot 10^{12} \cdot (1+\log n) [2+ (3(n-k)+2) \log(\alpha) ] .
\end{equation}
Now, we focus on two inequalities \eqref{2k} and \eqref{n-k} to get a bound for $ n $ by examining the cases $ k \leq m $ and $ m <k  $ separately. First, assume that $ k \leq m. $ Then $ n-m \leq n-k $ and therefore, from \eqref{n-k}, we find
$$
n-k-7 < 2.41 \cdot 10^{10}\cdot (1+\log {(n-k)})
$$
which means that $ n-k < 8 \cdot 10^{11}.$ So it follows that 
\begin{equation*}
n< 1.7 \cdot 10^{12},
\end{equation*}
since from the left side of \eqref{n}, we know that $ m-2<n-k $ and from the right side of \eqref{n}, $ n<2m+8.$

Let $ m \leq k .$ Then $ n<2k+8. $ From \eqref{n-k}, in particular we have that
\begin{equation}
\label{n-k2}
n-k-7 < 2.41 \cdot 10^{10}\cdot (1+\log {n}).
\end{equation}
Substituting \eqref{n-k2} into \eqref{2k} and using the fact that $ (n-8)/2<k ,$ we find that
\begin{equation}
\label{n2}
n< 7 \cdot 10^{26}.
\end{equation}
So whether $ m \leq k $ or not, in either case, we have that $ n< 7 \cdot 10^{26}. $ Now, we reduce this upper bound on $ n. $
\begin{lemma} \label{mbound}
If the equation \eqref{FcFL} holds, then $ m \leq 150. $  In particular, if $ k \leq m ,$ then the equation \eqref{FcFL} has only solution for $ F_n \in \{ 1, 2, 3, 13, 21, 34 \} .$
\end{lemma}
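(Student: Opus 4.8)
The starting data are the bound $n<7\cdot 10^{26}$ established above together with \eqref{n}, which gives $|m-(n-k)|\le 8$; hence the whole task reduces to bounding $n-k$.

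The plan is to reduce the linear form $\Lambda_{1}$ of \eqref{1ineq}. Since $\Lambda_{1}$ is a two-term form (in $\log 10$ and $\log\alpha$) with no additive constant, Lemma \ref{reduction} does not apply directly, so I would instead use the elementary theory of continued fractions of $\gamma:=\log 10/\log\alpha$. Taking logarithms in \eqref{1ineq}, and using $|x|<2|e^{x}-1|$ when $|e^{x}-1|<1/2$, gives
\[
\left|\gamma-\frac{n-m}{d}\right|<\frac{6}{d\log\alpha}\,\alpha^{-(n-k-7)} .
\]
Provided $n-k$ exceeds an explicit threshold (otherwise $n-k$, and hence $m$, is already small), the right-hand side is below $1/(2d^{2})$, so by Legendre's theorem $(n-m)/d$ is a convergent $p_{j}/q_{j}$ of $\gamma$ with $q_{j}\le d\le n<7\cdot 10^{26}$. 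Computing the continued fraction expansion of $\gamma$ in \textsf{Maple} until the denominators exceed $7\cdot 10^{26}$, and letting $a_{\max}$ be the largest of the ensuing partial quotients $a_{j+1}$, the inequality $\| q_{j}\gamma\|>1/((a_{j+1}+2)q_{j})$ forces $\alpha^{\,n-k-7}<\frac{6(a_{\max}+2)}{\log\alpha}\cdot 7\cdot 10^{26}$, i.e. an explicit bound $n-k\le C_{0}$, whence $m\le C_{0}+2$. In the subcase $k\le m$ one may instead start from the sharper bound $n<1.7\cdot 10^{12}$ derived earlier, which shortens the estimates.

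If $C_{0}+2$ does not yet meet the claimed bound $150$, I would iterate: substituting $n-k\le C_{0}$ into \eqref{2k} yields $k<C_{1}(1+\log k)$ for an explicit $C_{1}$, hence a far smaller bound $n<N_{1}$; rerunning the continued-fraction step with $N_{1}$ in place of $7\cdot 10^{26}$ shrinks $C_{0}$, and two or three passes bring it below $148$. This gives $m\le 150$. The "in particular" clause is then immediate: if $k\le m\le 150$ then $m,k<200$, which is exactly the range already checked by computer, so \eqref{FcFL} can only produce $F_{n}\in\{1,2,3,13,21,34\}$.

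The delicate point is the continued-fraction control: one has to make sure the partial quotients of $\log 10/\log\alpha$ do not grow too fast over an enormous range of denominators (of size $\sim 10^{26}$ on the first pass) and keep all numerical constants honest through the iteration — this is the step that genuinely needs machine computation, and it is conceivable that one carefully chosen pass already suffices.
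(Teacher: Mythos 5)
Your proposal is correct and follows essentially the same route as the paper: the paper also reduces the two-term form $\Gamma_1=d\log 10-(n-m)\log\alpha$ via Legendre's theorem, showing $d/(n-m)$ is a convergent of $\log\alpha/\log 10$ with denominator at most $7\cdot 10^{26}$, finds the largest partial quotient in that range to be $106$, and deduces a contradiction with $m>150$ (your only cosmetic differences are using the reciprocal $\log 10/\log\alpha$ with denominator $d$ instead of $n-m$, and allowing an iteration that the paper avoids by arranging constants so that one pass suffices). The "in particular" clause is handled exactly as you say, by falling back on the computer search over $m,k<200$.
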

\begin{proof}
Suppose that $ m>150.$ Let $\Gamma_1 :=d \log {10} -(n-m) \log \alpha.$ Since  $ 141< m-9 < n-k-7,$ from \eqref{1ineq} 
$$\left| \Lambda_1 \right|:=\left| \exp ({\Gamma_1}) -1 \right| <\dfrac{3}{\alpha^{n-k-7}} <\dfrac{1}{2}.$$
Recall that, when $ v <\dfrac{1}{2} ,$ the inequality 
$$ \left| \exp (u) -1 \right| <  v \quad \text{implies that} \quad \left| u  \right| <  2 v.$$
So, it follows that $ \left| \Gamma_1 \right| < \dfrac{6}{\alpha^{n-k-7}}.$ Moreover, $\Gamma_1  \neq 0,$ since $ \Lambda_1 .$ Thus, we write
\begin{equation}
\label{r1} 
0< \left| \dfrac{ d }{n-m }  -  \dfrac{ \log \alpha }{\log 10 } \right | < \dfrac{6}{ (n-m) \alpha^{n-k-7} \log 10}.
\end{equation}
Note that, we have
$$
\dfrac{6}{ (n-m) \alpha^{n-k-7} \log 10} < \dfrac{1}{2(n-m)^2},
$$
for otherwise
$$
\dfrac{6}{ (n-m) \alpha^{n-k-7} \log 10} \geq \dfrac{1}{2(n-m)^2}
$$
implies that
$$
n>n-m> \dfrac{ \alpha^{n-k-7} \log 10  }{ 12 } > \dfrac{ \alpha^{141} \log 10 }{ 12 } > 5.6 \cdot 10^{28},
$$
which is a contradiction because of \eqref{n2}. Then we have that
$$
 0< \left| \dfrac{ \log \alpha }{\log 10 } - \dfrac{ d }{n-m }  \right | < \dfrac{1}{2(n-m)^2},
$$
which implies that $ \dfrac{d}{n-m} $ is a convergent of continued fractions of $ \dfrac{ \log \alpha }{\log 10 } $, say $ \dfrac{ d }{n-m } = \dfrac{p_i}{q_i} .$ Since $\gcd(p_i, q_i)=1$, we have $ q_i \leq n-m \leq n <7 \cdot 10^{26} .$  Let $[a_1,a_2,a_3,a_4,a_5,\ldots]=[0, 4, 1, 3, 1, \ldots ]$ be the continued fraction expansion of $ \log \alpha / \log 10 $. With the help of Maple, we find that $ i < 54 $  and $ \max\{a_i\} =a_{37}=106 $ for $ i=1,2, \ldots , 54. $ So from the well known property of continued fractions, see for example \cite[Theorem 1.1.(iv)]{Hen}, we get that
$$
\dfrac{1}{108 (n-m)^2} \leq \dfrac{1}{(a_i +2) (n-m)^2} <  \left| \dfrac{ \log \alpha }{\log 10 } -  \dfrac{ d }{n-m }  \right |  <  \dfrac{6}{ (n-m) \alpha^{n-k-7} \log 10} 
$$
which means
$$ n> n-m > \dfrac{ \alpha^{n-k-7} \log 10 }{6 \cdot 108}  > 10^{27}. $$
But this is also a contradiction, since $ n < 7 \cdot 10^{26} .$ Therefore, we conclude that $ m \leq 150. $
\end{proof}
By Lemma \ref{mbound}, from now on, we assume that $ m \leq k $ and hence $ n<2k+8 .$  Moreover, we write also $ n-k<m+8 \leq 158.$ By substituting this upper bound for $ n-k $ into the \eqref{2k}, we get a better bound for $ n $ as
\begin{equation}
\label{2k2}
n-8 < 2k < 2 \cdot 2.24 \cdot 10^{12} \cdot (1+\log n) ( 476 \log(\alpha) + 2 ) .
\end{equation}
So from \eqref{2k2}, it follows that
\begin{equation*}
n< 4.5 \cdot 10^{16} . 
\end{equation*}
Let
\begin{equation*} 
\label{gama2}
 \Gamma_2 :=d \log {10} -n \log \alpha + \log  \left( { \dfrac{ F_m  \sqrt{5}}{ 1- \alpha^{k-n} \sqrt{5} } } \right) 
\end{equation*}
so that $ \left | \Lambda_2 \right |:=\left | \exp{(\Gamma_2)}-1 \right | <\dfrac{7}{\alpha^{2k}}.$ Then $ \left | \Gamma_2 \right |  < \dfrac{14} {\alpha^{2k}},$ since $ \dfrac{1}{\alpha^{2k}} < \dfrac{1}{2}.$
So, from \eqref{gama2},

\begin{equation*}
0 < \left | d \dfrac{\log {10}}{\log \alpha} -n +  \dfrac{  \log  \left( { \dfrac{ F_m  \sqrt{5}}{ 1- \alpha^{k-n} \sqrt{5} } } \right)  }{\log \alpha}    \right | < \dfrac{14}{\alpha^{2k} \log \alpha }.
\end{equation*}
Now, we take
$$  
M:=4.5 \cdot 10^{16} > n > d  \qquad \text{and} \qquad \tau:=\dfrac{ \log{ 10}}{\log \alpha}.
$$
Then, in the continued fraction expansion of irrational $ \tau$, we take $ q_{60} ,$ the denominator of the $ 60th $ convergent of  $ \tau,$ which exceeds $ 6M. $  Now, with the help of Maple, we calculate
$$ \epsilon_{m, n-k} := ||\mu_{m, n-k} q_{60} || -M || \tau q_{60} || $$
for each $ 1 \leq m \leq 150 $ and $ 4 \leq n-k <m+8, $   where 
$$ \mu_{m, n-k} :=    \dfrac{  \log  \left( { \dfrac{ F_m  \sqrt{5}}{ 1- \alpha^{k-n} \sqrt{5} } } \right)  }{\log \alpha}, \qquad q_{60}=2568762252997982327345614176552   $$ 
and we see that 
$$ 0.00034 < \epsilon_{50, 20} \leq \epsilon_{m, n-k},  \qquad \text{for all} \quad m, n-k . $$
Let $ A:= \dfrac{14}{\log \alpha} ,$ $ B:=\alpha $ and $ \omega :=2k $ . Thus from  Lemma \ref{reduction}, we find that
$$  2k < \dfrac{ \log{ \left( Aq_{60} /{0.00034} \right) }}{\log B} < 170 .$$
So we get that $ k < 85 $ which is a contradiction because of bound of $ k .$ Thus, we conclude that the numbers 1, 2, 3, 13, 21 and 34 are the only Fibonacci numbers which are expressible as a concatenation of a Fibonacci and a Lucas number.

\textbf{Proof of Theorem \ref{main2}:}
Assume that the equation \eqref{FcLF} holds. As $ d=\lfloor \log_{10}F_k \rfloor +1  $ is the number of digits of $ F_k ,$ we have that
$$  d=\lfloor \log_{10}F_k \rfloor +1 \leq  1+\log_{10}F_k  \leq 1+ \log_{10}{ \alpha^{k-1} }= 1+ {(k-1)} \log_{10}{ \alpha }< \frac{k+3}{4} $$
and
$$  d=\lfloor \log_{10}F_k \rfloor +1 > \log_{10}F_k  \geq \log_{10}{ \alpha^{k-2} } = {(k-2)} \log_{10}{ \alpha }> \frac{k-2}{5} $$
since $ (1/5)< \log_{10}{ \alpha } \cong 0.208... <(1/4) .$ Hence, we have that
\begin{equation}
\label{bd2}
\frac{k-2}{5} < d < \frac{k+3}{4},
\end{equation}
and
$$ F_k  \leq 10^{\lfloor \log_{10}F_k \rfloor} <10^d < 10^{1+\lfloor \log_{10}F_k \rfloor}  \leq 10 F_k .$$
From the last inequality together with \eqref{FcLF}, we can  find a range of $ n $ depending on $ m $ and $ k. $  More precisely,
$$\alpha^{n-1} \leq F_n=L_m 10^d +F_k \leq L_m 10 F_k+F_k <11 L_m F_k < 22 \alpha^{m+k-1} < \alpha^{m+k+7}  $$
and
$$ \alpha^{n-2} \geq F_n=L_m 10^d +F_k \geq L_m F_k+F_k >  L_m F_k > \alpha^{m+k-3}.$$
Hence
\begin{equation}
\label{bn2}
m+k-1 < n < m+k+7.
\end{equation}
When $ k,m \leq 200 $, by a quick computation, we see that the only solutions of \eqref{FcLF} are those given in Theorem \ref{main2}. So from now on, we assume that $ \max\{m,k \} >200. $ Using Binet formula for Fibonacci and Lucas sequences, we rewrite equation \eqref{FcLF} as
$$  \dfrac{\alpha^{n}}{\sqrt{5}}  - \dfrac{\beta^{n}}{\sqrt{5}}    =  \left(  {\alpha^{m} + \beta^{m}}  \right) 10^d +F_k,$$
$$  \dfrac{\alpha^{n}}{\sqrt{5}}  -  \alpha^{m} 10^d  =  \dfrac{\beta^{n}}{\sqrt{5}} +  \beta^{m} 10^d +F_k,$$
Now, we multiply both sides of the above equation by $ {\sqrt{5}}/{\alpha^{n}}  ,$ and take the absolute value of both sides of it. Thus, we get
\begin{align*}
\left| 1-  \dfrac{10^{d} }{ \alpha^{n-m} / \sqrt{5} }  \right|  &  \leq \dfrac{\left| \beta \right |^{n}}{\alpha^n} +   \dfrac{\left| \beta \right |^{m}10^d \sqrt{5} }{\alpha^n} + \dfrac{F_k \sqrt{5} }{\alpha^n} \\
& < \dfrac{  1}{ \alpha^{2n}} +   \dfrac{ 10 F_k \sqrt{5}  }{ \alpha^{n+m}}+ \dfrac{\alpha^{k-1} \alpha^{2}}{\alpha^n} \\
& < \dfrac{  1}{ \alpha^{2n}} +   \dfrac{ \alpha^{5} \alpha^{k-1} \alpha^{2}  }{ \alpha^{n+m}}+ \dfrac{1}{\alpha^{n-k-1}} \\
& < \dfrac{ 3}{ \alpha^{n-k-6}}.
\end{align*}
So, we have that
\begin{equation}
\label{3ineq}  
 \Lambda_3 := \left| 1-  \dfrac{10^{d} \sqrt{5} }{ \alpha^{n-m}  }  \right|  < \dfrac{ 3}{ \alpha^{n-k-6}}.
\end{equation}
Now, we may apply Theorem \ref{Matveev} to the left side of the inequality \eqref{3ineq} with
$(\eta_1, b_1)=(10, d)$, $(\eta_2, b_2)=(\alpha, -(n-m))$ and $(\eta_3, b_3)=(\sqrt{5} , 1)$.
Since $ \eta_1,$ $\eta_2$ and $ \eta_3 $ belong to the real quadratic number field $ \mathbb{K}=\mathbb{Q}(\sqrt{5}) $, we take $ d_\mathbb{K}=2 $, to be the degree of the number field $ \mathbb{K}.$ Since
$  h(\eta_1)=\log{10},$  $h(\eta_2)=\dfrac{1}{2} \log {\alpha},$ and  $h(\eta_3)=\log \sqrt{5}$ we take
$ A_1 = 2\log{10},$  $ A_2= \log {\alpha}$  and  $A_3= \log 5.$
Assume that $ \Lambda_3=0. $ Then, we get that $ \alpha^{n-m} = 10^d \sqrt{5},$ that is,  $ \alpha^{2(n-m)} = 5 \cdot 10^{2d} \in \mathbb{Q} ,$ but this is false for $ n-m \neq 0 $ and one can see that the case $ n-m=0 $ is not possible from  \eqref{FcLF}.  So $ \Lambda_3 \neq 0.$ 

Now, we claim that $ \max\{d, n-m \}=n-m .$ Indeed, if   
$ n-m < d$ then from the inequalities \eqref{bd2} and  \eqref{bn2}, we get that $ k-1<n-m<d<(k+3)/4.$ Thus $ k<3 ,$ which means that $ d=1 $ and hence $ n-m<1, $ that is $ n=m. $  But from the identity $L_i=F_{i-1}+F_{i+1}, $ we see that the case $ n=m $ is not possible. So 
$$
B: = \max\{ |b_i | \} = \max\{ d, n-m, 1 \} = n-m.
$$
Now, we are ready to apply Theorem \ref{Matveev} to $ \Lambda_3 $ given in \eqref{3ineq}, and we get that
$$
\log{( \Lambda_3 ) } > -1.4 \cdot 30^6 \cdot 3^{4.5} \cdot 2^2 (1+\log 2)(1+\log {(n-m)}) \cdot 2 \log{10} \cdot \log \alpha  \cdot  \log 5.
$$
Combining this inequality with the one directly obtained from \eqref{3ineq}, as $ \log( \Lambda_3 )< \log 3 -(n-k-6)\log \alpha  ,$ we get that
\begin{equation}
\label{bn-k2}
n-k-6 < 7.19 \cdot 10^{12} (1+\log {(n-m)}).
\end{equation}
Now, we rewrite \eqref{FcLF} as
$$\dfrac{\alpha^{n}}{\sqrt{5}} - L_m 10^d - \dfrac{\alpha^{k}}{\sqrt{5}} =   \dfrac{\beta^{n}}{\sqrt{5}}  +  \dfrac{\beta^{k}}{\sqrt{5}} ,$$
$$\dfrac{\alpha^{n}}{\sqrt{5}} \left( 1-  \alpha^{k-n} \right) - L_m 10^d =   \dfrac{\beta^{n}}{\sqrt{5}}  +  \dfrac{\beta^{k}}{\sqrt{5}} .$$
After dividing both sides of the last equation by
$ \dfrac{\alpha^{n}}{\sqrt{5}} \left( 1-  \alpha^{k-n} \right) ,$ 
and taking the absolute value of both sides of it, we get that
$$ 
\left| 1- \dfrac{ L_m 10^d \sqrt{5} }{  \alpha^n ( 1-\alpha^{k-n} ) }  \right| \leq      \left(    \dfrac{1}{ 1-\alpha^{k-n} }     \right) \left(  \dfrac{ |\beta^n | }{\alpha^n }  +  \dfrac{ |\beta^k |}{\alpha^n }  \right).
$$
Taking into account $ 1< \dfrac{1}{ 1-\alpha^{k-n} } <2 $ for $ n-k \geq 2,$ we get that
\begin{equation}
\label{4ineq}
\left| \Lambda_4 \right | :=  \left| 1- \dfrac{ L_m 10^d \sqrt{5} }{  \alpha^n \left( 1- \alpha^{k-n} \right) } \right| < \dfrac{4}{\alpha^{2k}}.
\end{equation}
Let
$(\eta_1, b_1)=(10, d)$, $(\eta_2, b_2)=(\alpha, -n)$ and $(\eta_3, b_3)=\left( \dfrac{ L_m \sqrt{5} }{  1- \alpha^{k-n}   }  , 1 \right) .$ All of  $ \eta_1, \eta_2$ and $ \eta_3 $  belong to real quadratic number field $ \mathbb{K}=\mathbb{Q}(\sqrt{5}) $, which has degree $ d_\mathbb{K}=2 .$
Since
$h(\eta_1)= \log{10},$  $h(\eta_2)=(1/2)\log(\alpha)$
and 
$$
h(\eta_3) \leq  h( L_m ) +  h( \sqrt{5} )  +   h( 1- \alpha^{k-n} ) \leq  \log(L_m) + \log(\sqrt{5})  + h( \alpha^{k-n})  +\log 2
$$
$$
<  \log (2\alpha^m) + \log(\sqrt{5})  + \dfrac{|k-n|}{2} \log \alpha  +\log 2.
$$
So 
$$
h(\eta_3) < \dfrac{3(n-k)+2}{2} \log(\alpha) + \log(4\sqrt{5})
$$
where we used the fact that $ m<n-k+1 $ from \eqref{bn2}.
So we take $ A_1 = 2\log {10},$   $A_2= \log \alpha $ and $A_3= (3(n-k)+2) \log(\alpha) + \log(80).$ Clearly, $ B:=\max\{ |b_i | \} = \max\{ d, n, 1 \}= n.$ Also $ \Lambda_4 \neq 0.$ To show this fact assume that $ \Lambda_4=0. $ Then, we get that $\alpha^{n} - \alpha^{k} =   10^d L_m \sqrt{5}. $ Conjugating in  $ \mathbb{K} ,$ we find $\beta^{n} - \beta^{k} =  - 10^d L_m \sqrt{5}. $ Adding side by side the last two equalities we obtain that $ L_n-L_k=0, $ that is $ n=k, $ a contradiction. So $ \Lambda_4 \neq 0.$ Thus, we apply Theorem \ref{Matveev}  to $ \Lambda_4 $ given in \eqref{4ineq}, and we get that
\begin{equation}
\label{b232}
\log{(  \Lambda_4|) } > -1.4 \cdot  30^6 \cdot 3^{4.5} \cdot 2^2 (1+\log 2)(1+\log n) A_3 2\log{10} \log \alpha.
\end{equation}
On the other hand, from \eqref{4ineq}, taking into account that $\log{(  \Lambda_4 ) } < \log4 -2k \log \alpha ,$ we get that
\begin{equation}
\label{b2k}
k < 2.24 \cdot 10^{12} \cdot (1+\log n) [ (3(n-k)+2) \log(\alpha) + \log(80) ] .
\end{equation}
Now, we use the two inequalities  \eqref{bn-k2} and \eqref{b2k} to get an initial bound on the variable $n$. To do this, first assume that $ k \leq m. $ Then, $ n-m \leq n-k $ and hence, from \eqref{bn-k2}, we write 
$$
n-k-6 < 7.19 \cdot 10^{12} (1+\log {(n-k)})
$$
which implies that $ n-k < 3 \cdot 10^{14}.$ So it follows that 
\begin{equation*}
n< 10^{15},
\end{equation*}
since from \eqref{bn2}, we know that $ n<2m+4$ and $ m<n-k+1.$

Let  $ m \leq k .$ From \eqref{bn-k2}, in particular, we have that
\begin{equation*}
\label{bn-k23}
n-k-6 < 7.19 \cdot 10^{12}(1+\log {n}).
\end{equation*}
Substituting \eqref{bn-k23} into \eqref{b2k} and using the fact that $ n<2k+7 ,$ we find that
\begin{equation}
\label{bn23}
n< 2.5 \cdot 10^{29}.
\end{equation}
So, whether $ m \leq k $ or not, we have that $ n< 2.5 \cdot 10^{29}. $

\begin{lemma} \label{bmbound}
If the equation \eqref{FcLF} holds then $ m \leq 168. $  In particular, if $ k \leq m ,$ then the equation has the solution only for  $ F_n \in \{ 13, 21 \} .$
\end{lemma}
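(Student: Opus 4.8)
The plan is to run the same machinery as in Lemma~\ref{mbound}, the only difference being that the two--term linear form $\Gamma_1$ gets replaced by a three--term form built from $\Lambda_3$. First I would assume, for contradiction, that $m>168$. Since the left inequality of \eqref{bn2} gives $n-k\geq m$, we then have $n-k-6\geq 163$, which makes $\Lambda_3$ very small: by \eqref{3ineq}, $\Lambda_3=|1-10^{d}\sqrt5/\alpha^{n-m}|<3/\alpha^{n-k-6}<1/2$. Writing $\Lambda_3=|\exp(\Gamma_3)-1|$ with $\Gamma_3:=d\log 10+\log\sqrt5-(n-m)\log\alpha$ and using $|\exp(u)-1|<v<1/2\Rightarrow|u|<2v$, I would get $|\Gamma_3|<6/\alpha^{n-k-6}$; moreover $\Gamma_3\neq 0$ because $\Lambda_3\neq 0$ was already established. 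Dividing by $\log 10$ then brings this to the shape
$$0<\left|(n-m)\frac{\log\alpha}{\log 10}-d-\frac{\log\sqrt5}{\log 10}\right|<\frac{6}{\alpha^{n-k-6}\log 10},$$
which matches the hypothesis of Lemma~\ref{reduction}.

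Next I would apply Lemma~\ref{reduction} with $\gamma:=\log\alpha/\log 10$, $\mu:=-\log\sqrt5/\log 10$, $A:=6/\log 10$, $B:=\alpha$, $u:=n-m$, $v:=d$, $w:=n-k-6$, and $M:=2.5\cdot 10^{29}$, which is an upper bound for $n$ and hence for $u=n-m$ by \eqref{bn23}. Using Maple I would select a convergent $p/q$ of $\gamma$ with $q>6M$, verify that $\epsilon:=\|\mu q\|-M\|\gamma q\|>0$, and read off that Lemma~\ref{reduction} leaves no solution with $w\geq\log(Aq/\epsilon)/\log\alpha$. I expect this threshold to come out below $163$; since $w=n-k-6\geq 163$, that is a contradiction, and hence $m\leq 168$. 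The finitely many configurations in which $n-k$ is so small that $\Lambda_3<1/2$ fails cause no trouble, because there $m\leq n-k$ is itself bounded by a small constant and is covered by the initial search.

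For the ``in particular'' statement, suppose $k\leq m$. With $m\leq 168$ this gives $\max\{m,k\}\leq 168<200$, contradicting the standing assumption $\max\{m,k\}>200$ in force since the initial search. Consequently every solution of \eqref{FcLF} with $k\leq m$ already satisfies $m,k\leq 200$, and the direct computation recorded just after \eqref{bn2} shows that the only Fibonacci numbers arising are $F_n\in\{13,21\}$.

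The step I expect to be the real obstacle is precisely the one at which the argument departs from Lemma~\ref{mbound}: because $\Lambda_3$ carries the factor $\sqrt5$, the form $\Gamma_3$ involves three logarithms, so $d/(n-m)$ is no longer a convergent of $\log\alpha/\log 10$ and the tidy ``$a_i+2$'' continued--fraction lower bound used in Lemma~\ref{mbound} is not available. One must instead run the full Baker--Davenport reduction (Lemma~\ref{reduction}) with a genuinely nonzero inhomogeneous term $\mu$, and the delicate point is the numerical certification: producing a convergent $q$ just past $6M$ for which $\epsilon>0$ \emph{and} for which $\log(Aq/\epsilon)/\log\alpha<163$, passing to later convergents should the first candidate fail on either count.
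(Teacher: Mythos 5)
Your proposal is correct and follows essentially the same route as the paper: assume $m>168$, deduce $n-k-6>161$, linearize $\Lambda_3$ to $|\Gamma_3|<6/\alpha^{n-k-6}$, and apply Lemma~\ref{reduction} with $M=2.5\cdot10^{29}$ to force $n-k-6$ below that threshold, a contradiction (the paper uses $\gamma=\log 10/\log\alpha$ with $u=d$ and $\mu=\log\sqrt5/\log\alpha$, takes $q_{60}$, and gets the bound $160$; your normalization by $\log 10$ with $u=n-m$ is an immaterial variant). Your handling of the ``in particular'' clause via the standing assumption $\max\{m,k\}>200$ also matches the paper.
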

\begin{proof}
Suppose that $ m>168.$ Let 
\begin{equation}
\label{gama3} 
\Gamma_3 :=d \log {10} -(n-m) \log \alpha + \log \sqrt{5} .
\end{equation}
Since  $ 161<m-7< n-k-6,$  $ \left| \Lambda_3 \right|:=\left| \exp {(\Gamma_3)} -1 \right| <\dfrac{3}{\alpha^{n-k-6}} < \dfrac{1}{2}.$
Hence, we get that $ \left| \Gamma_3 \right| < \dfrac{6}{\alpha^{n-k-6}}.$
Thus from \eqref{gama3}, we write
\begin{equation*}
0< \left| d \dfrac{ \log {10} }{\log \alpha }  - (n-m) + \dfrac{\log \sqrt{5}}{\log \alpha} \right | < \dfrac{6}{ \alpha^{n-k-6} \log \alpha}.
\end{equation*}
Let $ M:= 2.5 \cdot 10^{29} > n > d$ and  $\tau = \dfrac{\log {10}}{\log \alpha}.$ Then, in the continued fraction expansion of irrational $ \tau$, we see that $ q_{60},$ the denominator of the $ 60th $ convergent of $ \tau,$ exceeds $ 6M.$ With the help of Maple, we calculate
$$ 
\epsilon := ||\mu q_{60} || -M || \tau q_{60} ||
$$
where $\mu = \dfrac{\log \sqrt{5}}{\log \alpha}$ and we find that $0.017775 < \epsilon .$ Let $ A:= \dfrac{6}{\log \alpha} ,$ $ B:=\alpha $ and $ \omega :=n-k-6 .$ From  Lemma \ref{reduction}, we find that
$$  n-k-6 < \dfrac{ \log{ \left( Aq_{60}/ \epsilon \right) }  }{\log B} < 160 .$$
But this contradicts the fact that $ 161<m-7< n-k-6.$ So, we conclude that $ m \leq 168 .$
\end{proof}
By Lemma \ref{bmbound}, from now on, we deal with only the case $ m<k. $  Since $ n-k<m+7  \leq 175 ,$ by substituting this upper bound for $ n-k $ into \eqref{b2k}, we get that
\begin{equation}
\label{b2k2}
n <2k +7 < 2 \cdot 2.24 \cdot 10^{12} \cdot (1+\log n) [ 527 \log(\alpha) + \log(80) ] +7 .
\end{equation}
So from \eqref{b2k2}, it follows that
\begin{equation*}
n< 4.6 \cdot 10^{16} . 
\end{equation*}
Let
\begin{equation}
\label{gama4} 
\Gamma_4 :=d \log {10} -n \log \alpha + \log { \left( \dfrac{ L_m \sqrt{5}}{  1- \alpha^{k-n}}   \right) }. 
\end{equation}
So
$$ 
\left | \Lambda_4 \right |:=\left | \exp{\Gamma_4}-1 \right | <\dfrac{4}{\alpha^{2k}}.
$$
Then, $\left | \Gamma_4 \right |  < \dfrac{8}{\alpha^{2k}},
$ since $ \dfrac{1}{\alpha^{2k}} < \dfrac{1}{2}.$
Thus, from \eqref{gama4},
\begin{equation}
\label{br2}
0 < \left | \dfrac{\Gamma_4}{\log \alpha} \right | < \dfrac{}{\alpha^{2k} \log \alpha }.
\end{equation}
Now, we take $ M:=4.6 \cdot 10^{16} > n > d $ and $\tau:=\dfrac{ \log{ 10}}{\log \alpha}$ which is irrational.
Then, in the continued fraction expansion of $ \tau$, we take $ q_{91} ,$ the denominator of the $ 91th $ convergent of  $ \tau,$ which exceeds $ 6M. $  Now, with the help of Maple we calculate
$$ \epsilon_{m, n-k} := ||\mu_{m, n-k} q_{91} || -M || \tau q_{91} || $$
for each $ 0 \leq m \leq 168 $ and $ 3 \leq n-k <m+7 ,$   where 
$$ \mu_{m, n-k} :=    \dfrac{ \log { \left( \dfrac{ L_m \sqrt{5}}{  1- \alpha^{k-n}}   \right) }   }{\log \alpha}   $$  except for $ n-k=4$ and $ n-k=8 .$

For these two values $ \epsilon_{m, 4}<0  $ and $ \epsilon_{m, 8} <0 .$ To overcome this problem, we use the periodicity of Fibonacci sequences. With an easy computation, one can see that there is no positive integer $ t $  in the range $ 1 \leq t \leq 20 $ which satisfies neither $ F_t \equiv F_{t+4} \pmod 5  $ nor $ F_t \equiv F_{t+8} \pmod 5 .$ Since the period of the Fibonacci sequence modulo 5 is 20 \cite[Theorem 35.6.]{Koshy}, we conclude that there is no positive integer which satisfies either of these two congruences. So, from \eqref{FcLF}, we take $ n-k \neq 4 $ and $ n-k \neq 8. $

Let $ A:= \dfrac{8}{\log \alpha} ,$ $ B:=\alpha $ and $ \omega :=2k .$ Thus, from  Lemma \ref{reduction} we get that, the inequality \eqref{br2} has solutions only for
$$  2k \leq \dfrac{ \log{ \left( Aq_{91} /{0.000109} \right) }}{\log B} \leq 233 .$$
So, we get that $m< k < 117, $ which is a contradiction because of bound of $ k .$ Thus we conclude that the equation \eqref{FcLF} has no solution when $ F_n \not\in \{ 13, 21 \} .$ This completes the proof.

    
\textbf{Discussion :} In this paper, we aimed to search the results of mixed concatenations of the numbers belonging to the different sequences in the Fibonacci and Lucas particular case and we found that there are only a handful of Fibonacci numbers that can be written as a mixed concatenation  of a Fibonacci and a Lucas numbers. The results of this paper together with those of \cite{Alan, Banks} bring the question to the mind that could there exist a nondegenerate \emph{binary} recurrence sequence which contains infinitely many terms which are expressible as a mixed concatenation of, say, a Fibonacci and a Lucas numbers? Let us define a sequence $ u_n = 10F_n+1 .$ Then, clearly, all terms of this sequence are a concatenation of a Fibonacci and $ 1. $ So, in the above question, we can not omit the expression "nondegenerate \emph{binary}". As a final remark, it is also worth noting that just using some properties and identities of Fibonacci and Lucas numbers, we could get only a few partial results so we used the method in this paper, which is the effective combination of Baker's method together with the reduction method.

\end{document}